\documentclass[11pt]{amsart}
\usepackage{graphicx}
\vfuzz2pt 
\hfuzz2pt 
\newtheorem{thm}{Theorem}[section]
\newtheorem{cor}[thm]{Corollary}

\newtheorem{prop}[thm]{Proposition}
\theoremstyle{definition}

\theoremstyle{remark}

\numberwithin{equation}{section}

\begin{document}

\title[ Wilson's functional equation]{Solutions and  stability of a generalization of Wilson's equation}%
\author[B. Bouikhalene and E. Elqorachi]{Bouikhalene Belaid and Elqorachi Elhoucien}%
\thanks{\textit{Key words and phrases}, D'Alembert's functional equation, Locally compact group, Involution, Character, Complex measure,  Wilson's functional equation, Hyers-Ulam stability. }%
\thanks{\textit{2010 Mathematics Subject Classification}. 39B82, 39B32, 39B52.}


\begin{abstract}
In this paper we study the solutions and stability of the
generalized Wilson's functional equation
$\int_{G}f(xty)d\mu(t)+\int_{G}f(xt\sigma(y))d\mu(t)=2f(x)g(y),\;
x,y\in G$,  where $G$ is a locally compact group, $\sigma$ is a
continuous  involution of $G$  and $\mu$ is an idempotent complex
measure with compact support and which is  $\sigma$-invariant. We
show that
$\int_{G}g(xty)d\mu(t)+\int_{G}g(xt\sigma(y))d\mu(t)=2g(x)g(y),\;
x,y\in G$ if $f\neq 0$ and $\int_{G}f(t.)d\mu(t)\neq 0$. We also
study some stability theorems of that equation and we establish  the
stability on noncommutaive groups of the classical Wilson's
functional equation $f(xy)+\chi(y)f(x\sigma(y))=2f(x)g(y)\; x,y\in
G$ , where $\chi$ is a unitary character of $G$.
\end{abstract}
\maketitle
\section{Introduction and preliminaries }Althought d'Alembert's
functional equation \begin{equation}\label{eq221} f (x +y)+f (x -y)
= 2f (x)f (y)\;\text{for all }\;x,y \in \mathbb{R}\end{equation} for
functions $f$: $\mathbb{R}\longrightarrow \mathbb{C}$ on the real
line has it roots back in d'Alembert's investigation of vibrating
strings \cite{d1} from 1975. Furthermore, one solution of
(\ref{eq221}) is $f(x)=\cos(x)$, another $f(x)=\cosh(x)$. The
obvious extension of (\ref{eq221}) from $\mathbb{R}$ to an abelian
group ($G,+$) is the functional equation
\begin{equation}\label{eq222} f (x +y)+f (x -y)
= 2f (x)f (y)\;\text{for all }\;x,y \in {G},\end{equation} where
$f$: $G\longrightarrow \mathbb{C}$ is the unknown. The non-zero
solution of equation (\ref{eq222}) are of the form
$f(x)=\frac{\chi(x)+\chi(-x)}{2}$, $x\in G,$ where $\chi$ is a
character in $G$. The result is obtained by Kannappan in \cite{k1}.
If the group $G$ is not assumed abelian the solutions of equation
\begin{equation}\label{eq223} f (xy)+f (xy^{-1})
= 2f (x)f (y)\;\text{for all }\;x,y \in {G}\end{equation} are
obtained by Davison \cite{davis1,davis2}. There are of the form
$f=\frac{1}{2}tr(\varrho)$, where $\varrho$ is continuous
algebraicaly
irreducible representation of $G$ on $\mathbb{C}^{2}$.\\
In \cite{w} Wilson dealt with functional equations related to and
generalizing (\ref{eq221}) on the real line. He generalized the
d'Alembert's functional equation (\ref{eq221}) to
\begin{equation}\label{eq224} f (x+y)+f (x-y)
= 2f (x)g (y)\;\text{for all }\;x,y \in \mathbb{R}.\end{equation}
Let us not that if $f\neq 0$ is a solution of equation
(\ref{eq224}), then $g$ satisfies equation (\ref{eq221}). \\Some
general properties of the solutions of equation
\begin{equation}\label{eq225} f (xy)+f (x\sigma(y))
= 2f (x)g (y)\;\text{for all }\;x,y \in {G}\end{equation}on  a
topological monoid equipped with a continuous involution $\sigma$
can be found in \cite{st1}.\\Recently, Ebanks, Stetk\ae r \cite{st2}
and Stetk\ae r \cite{st3} proved a natural interesting relation
between  Wilson's functional equation  (\ref{eq225})
 and d'Alembert's functional equation \begin{equation}\label{eq226} f (xy)+f (x\sigma(y))
= 2f (x)f (y)\;\text{for all }\;x,y \in {G}\end{equation} and for
$\sigma(x)=x^{-1}$. That is if $f\neq 0$ is a solution of equation
(\ref{eq225}), the $g$ is a solution of equation (\ref{eq226}).\\
The Hyers-Ulam stability of d'Alembert's functional equation
(\ref{eq222})
 was investigated by J.A. Baker in
\cite{ba1}. In \cite{ba2}, J. Baker, J. Lawrence and F. Zorzitto
introduced the superstability of the exponential equation $f (x + y)
= f (x)f (y),x,y\in G$. Badora \cite{bad4} gave a new, shorter proof
of Baker's result.  A different generalization of the result of
Baker, Lawrence and Zorzitto was given by L. Sz\'ekelyhidi  \cite{s1}.\\
On abelian groups, the stability of d'Alembert's functional equation
(\ref{eq222}) and
 Wilson's functional
equation (\ref{eq224}) and other functional equation has been
investigated by several authors. The interested reader should refer
to \cite{badora2}, \cite{baa3}, \cite{b}, \cite{sahoo}, \cite{elq4},
\cite{g1}, \cite{h1}, \cite{kim3},  \cite{kim1}, \cite{kim2},
\cite{mos10}, \cite{16}, \cite{rd1}, \cite{s2} and \cite{24}, for a
thorough account on the subject of stability of functional equations.\\
The aim of this paper is to study some properties of the solutions
and Hyers-Ulam stability of some generalization of d'Alembert's and
Wilson's functional equations which has been introduced in
\cite{elq1}. As an application we obtain the Hyers-Ulam stability of
Wilson's functional  equation (\ref{eq225}) on groups that need not
be abelian.\\
\\Throughout this paper, we Let $G$ be a locally compact group, $C(G)$ the complex algebra of
all continuous complex valued functions on $G$. $M(G)$ the Banach
algebra of the complex bounded measures on $G$. It's the
topolological dual of $C_{0}(G):$ The Banach space of continuous
functions vanishing at infinity. Let $\sigma$: $G\longrightarrow G$
be a continuous involution of $G$, that is
$\sigma(xy)=\sigma(y)\sigma(x)$ and $\sigma(\sigma(x)) =x$ for all
$x,y\in G$. If $\mu\in M(G)$ is a measure with copmact support, we
let $\mu_{\sigma}$ denote the complex measure with compact support
and defined by the relation: $<\mu_{\sigma},f>=<\mu,f\circ \sigma>$,
$f\in C(G)$, where $<\mu,f>=\int_{G}f(t)d\mu(t)$. We will say that
$\mu$ is $\sigma$-invariant if $\mu=\mu_{\sigma}$. We recall that
the convolution measure $\mu\ast\mu$ is the measure defined on
$C(G)$ by $<\mu\ast\mu,f>=\int_{G}\int_{G}f(ts)d\mu(t)d\mu(s)$.
Finally, for a
 continuous function $f$ we let $f_\mu(x)=\int_{G}f(tx)d\mu(t),$ $x\in G$ and we say that $f$ is $\mu$-biinvariant, if
$\int_{G}f(xt)d\mu(t)=\int_{G}f(tx)d\mu(t)=f(x)$, for all $x\in
G$.\section{Relations between Wilson's and d'Alembert's functional
equations} In the special case where  $\mu$ is a Gelfand measure or
$f$ satisfies some Kannappan type condition Elqorachi and Akkouchi
[\cite{elq1}, Proposition 3.2] obtained a natural relation between
the generalized Wilson's functional equation
\begin{equation}\label{eq11}
\int_{G}f(xty)d\mu(t)+\int_{G}f(xt\sigma(y))d\mu(t)=2f(x)g(y),\;
x,y\in G\end{equation}
 and the generalized d'Alembert's short functional
equation \begin{equation}\label{eq12}
\int_{G}g(xty)d\mu(t)+\int_{G}g(xt\sigma(y))d\mu(t)=2g(x)g(y),\;
x,y\in G\end{equation} That is if the pair  $f,g$: $G\longrightarrow
\mathbb{C}$, where $f\neq 0$, is a solution of generalized Wilson's
functional equation (\ref{eq11}) then $g$ is a solution of the
generalized d'Alembert's functional equation (\ref{eq12}). In more
general setting the authors [\cite{elq2}, Corollary 2.7 (iii)] got a
weaker result, that is if the pair $f,g$: $G\longrightarrow
\mathbb{C}$, where $f\neq 0$, is a solution of generalized Wilson's
functional equation (\ref{eq11}) then $g$ is a solution of the
generalized d'Alembert's long functional equation
\begin{equation}\label{eq13}
\int_{G}g(xty)d\mu(t)+\int_{G}g(ytx)d\mu(t)+\int_{G}g(xt\sigma(y))d\mu(t)+\int_{G}g(\sigma(y)tx)d\mu(t)\end{equation}$$=4g(x)g(y),\;
x,y\in G.$$ The following theorem is a generalization of the result
obtained by Ebanks, Stetk\ae r  \cite{st2}.
\begin{thm}Let $\sigma$ be a continuous involution of $G$. Let $\mu$
be a complex measure with compact support and which  is
$\sigma$-invariant. If the pair $f,g$: $G\longrightarrow
\mathbb{C}$, where $f\neq 0$ is a  continuous solution of the
generalized Wilson's functional equation (\ref{eq11}) and $f$ is
odd: $f(\sigma(x))=-f(x)$ for all $x\in G.$  Then $g$ is a solution
of the generalized d'Alembert's short functional equation
(\ref{eq12}).
\end{thm}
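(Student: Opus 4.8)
The plan is to first reduce, via the long d'Alembert equation, to a single twisted symmetry identity for $g$, and then to prove that identity by a computation with double $\mu$-averages in which the oddness of $f$ does the decisive work. Throughout put $\Phi(x,y)=\int_Gf(xty)\,d\mu(t)$ and $G_\mu(x,y)=\int_Gg(xty)\,d\mu(t)$.

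If $g=0$ then \eqref{eq12} holds trivially, so assume $g\neq0$. First I would record the elementary facts: replacing $y$ by $\sigma(y)$ in \eqref{eq11} and using $f\neq0$ gives $g\circ\sigma=g$; since $\sigma(e)=e$ and $f$ is odd, $f(e)=0$, and then $y=e$ in \eqref{eq11} gives $\int_Gf(xt)\,d\mu(t)=g(e)f(x)$, whence, using $\mu=\mu_\sigma$, also $\int_Gf(tx)\,d\mu(t)=g(e)f(x)$ and the reflection $\Phi(x,y)=-\Phi(\sigma(y),\sigma(x))$ (hence $\Phi(u,\sigma(w))=-\Phi(w,\sigma(u))$). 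By \cite{elq2} we already know $g$ satisfies the long equation \eqref{eq13}; using $g\circ\sigma=g$ and $\mu=\mu_\sigma$, \eqref{eq13} rewrites as $S(x,y)+S(\sigma(x),y)=4g(x)g(y)$ with $S(x,y):=G_\mu(x,y)+G_\mu(x,\sigma(y))$. Consequently \eqref{eq12} is equivalent to the symmetry $S(\sigma(x),y)=S(x,y)$, i.e.\ to
\[
\int_Gg(xty)\,d\mu(t)+\int_Gg(xt\sigma(y))\,d\mu(t)=\int_Gg(ytx)\,d\mu(t)+\int_Gg(\sigma(y)tx)\,d\mu(t),\qquad x,y\in G,
\]
and it is this identity that I would prove.

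To prove it I would fix $a\in G$ with $f(a)\neq0$, use \eqref{eq11} in the form $2f(a)g(w)=\Phi(a,w)+\Phi(a,\sigma(w))$ to remove every $g$ from the identity, and introduce the double average $\Psi(u,v,w):=\int_G\int_Gf(usvtw)\,d\mu(s)\,d\mu(t)$; the identity then becomes an equality among eight terms $\Psi(a,\cdot,\cdot)$. Feeding \eqref{eq11} into $\Psi$ in the inner, resp.\ the outer, integration variable (and, for the latter, changing $t\mapsto\sigma(t)$) yields, for all $u,v,w$,
\[
\Psi(u,v,w)+\Psi(u,v,\sigma(w))=2g(w)\Phi(u,v),\qquad \Psi(u,v,w)+\Psi(u,\sigma(w),\sigma(v))=2f(u)G_\mu(v,w),
\]
and the oddness of $f$ together with $\mu=\mu_\sigma$ gives the reflection $\Psi(\sigma(w),\sigma(v),\sigma(u))=-\Psi(u,v,w)$. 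Combining these identities with the $\Phi$-reflections above should collapse the eight $\Psi$-terms to an expression in $\Phi(a,\cdot)$ and the known quantities $2f(u)g(v)$, and thereby verify the identity.

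The step I expect to be the main obstacle is precisely this collapse. Each application of \eqref{eq11} to a double integral leaves behind a trailing or leading factor coming from the convolution by $\mu$, and if one manipulates only consequences of \eqref{eq11}, \eqref{eq13} and $\mu=\mu_\sigma$ the bookkeeping closes up tautologically — one merely re-derives \eqref{eq13}. The point where one must genuinely use that $f$ is \emph{odd}, not just a solution of \eqref{eq11}, is in the reflection identities and in $f(e)=0$, $\int_Gf(xt)\,d\mu(t)=g(e)f(x)$: these break the left–right asymmetry of the convolution and let the computation close. A short continuity remark (passing from the open set $\{f\neq0\}$ to all of $G$), and, should $\mu$ be idempotent so that $g(e)\in\{0,1\}$, a reduction of the case $g(e)=0$ to the trivial case $g=0$, then complete the argument.
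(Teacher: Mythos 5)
Your outline stops short of the one step that actually carries the theorem, and that step is not a routine verification. You reduce, via the long equation (\ref{eq13}) and the evenness of $g$, to the symmetry $S(\sigma(x),y)=S(x,y)$ with $S(x,y)=\int_G g(xty)d\mu(t)+\int_G g(xt\sigma(y))d\mu(t)$, and you propose to prove it by multiplying by $2f(a)$ and collapsing eight triple averages $\Psi(a,\cdot,\cdot)$ using the relations $\Psi(u,v,w)+\Psi(u,v,\sigma(w))=2g(w)\Phi(u,v)$, $\Psi(u,v,w)+\Psi(u,\sigma(w),\sigma(v))=2f(u)G_\mu(v,w)$ and the reflection identities. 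But if one carries out the natural pairings with these relations, the computation closes up exactly into $S(x,y)+S(\sigma(x),y)=4g(x)g(y)$, i.e.\ one only recovers (\ref{eq13}) again --- precisely the circularity you yourself flag as ``the main obstacle.'' You assert that the oddness-based reflections ``should'' break this, but you never exhibit a combination that does, so the decisive cancellation is missing from the proposal; as it stands the argument proves nothing beyond the long equation. A secondary point: your reduction leans on the long equation imported from the reference [\cite{elq2}, Corollary 2.7 (iii)], whose hypotheses (in particular idempotence of $\mu$) are not part of the statement of this theorem, which assumes only compact support and $\sigma$-invariance.

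For comparison, the paper does not pass through (\ref{eq13}) at all. It sets $\Psi(x,y)=\int_G g(xty)d\mu(t)+\int_G g(\sigma(y)tx)d\mu(t)-2g(x)g(y)$ and proves, by an explicit sixteen-term expansion of $2f(z)\Psi(x,y)+2f(y)\Psi(x,z)$ in which every occurrence of $2g(\cdot)g(\cdot)$ and of $G_\mu$ is replaced through (\ref{eq11}), that the oddness of $f$ together with $\mu=\mu_\sigma$ makes all terms cancel, giving $f(z)\Psi(x,y)+f(y)\Psi(x,z)=0$ for all $x,y,z$. The standard proportionality argument ($\Psi(x,\cdot)=c_xf$, then $2c_xf(y)f(z)=0$, so $c_x=0$) yields $\Psi\equiv0$, and the evenness of $g$ plus $\sigma$-invariance of $\mu$ convert $\Psi\equiv0$ into the short equation (\ref{eq12}). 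That explicit cancellation is exactly the content your outline leaves unproved, so the proposal has a genuine gap rather than being an alternative proof.
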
\begin{proof} The  proof  is closely related to the one obtained by  Ebanks, Stetk\ae r
\cite{st2}.
 Let us assume that the pair $f,g$ is a solution of equation
(\ref{eq11}) with $f\neq 0$ and  $f(\sigma(x))=-f(x)$ for all $x\in
G$. By replacing $y$ by $\sigma(y)$ in  (\ref{eq11}) we get
$g(\sigma(x))=g(x)$ for all $x\in G.$  Now, we consider the new
function
$$\Psi(x,y)=\int_{G}g(xty)d\mu(t)+\int_{G}g(\sigma(y)tx)d\mu(t)-2g(x)g(y),\;x,y\in G.$$
By using (\ref{eq11}) we obtain
$$2f(z)\Psi(x,y)+2f(y)\Psi(x,z)=2f(z)[\int_{G}g(xty)d\mu(t)+\int_{G}g(\sigma(y)tx)d\mu(t)-2g(x)g(y)]$$
$$+2f(y)[\int_{G}g(xtz)d\mu(t)+\int_{G}g(\sigma(z)tx)d\mu(t)-2g(x)g(z)]$$
$$=\int_{G}\int_{G}f(zsxty)d\mu(t)d\mu(s)+\int_{G}\int_{G}f(zs\sigma(y)\sigma(t)\sigma(x))d\mu(t)d\mu(s)$$$$+
\int_{G}\int_{G}f(zs\sigma(y)tx)d\mu(t)d\mu(s)+\int_{G}\int_{G}f(zs\sigma(x)\sigma(t)y)d\mu(t)d\mu(s)$$
$$-\int_{G}\int_{G}f(ztxsy)d\mu(t)d\mu(s)-\int_{G}\int_{G}f(zsxt\sigma(y))d\mu(t)d\mu(s)$$
$$-\int_{G}\int_{G}f(zt\sigma(x)sy)d\mu(t)d\mu(s)-\int_{G}\int_{G}f(zt\sigma(x)s\sigma(y))d\mu(t)d\mu(s)$$
$$+\int_{G}\int_{G}f(ysxtz)d\mu(t)d\mu(s)+\int_{G}\int_{G}f(ys\sigma(z)\sigma(t)\sigma(x))d\mu(t)d\mu(s)$$
$$+\int_{G}\int_{G}f(ys\sigma(z)tx)d\mu(t)d\mu(s)+\int_{G}\int_{G}f(ys\sigma(x)\sigma(t)z)d\mu(t)d\mu(s)$$
$$-\int_{G}\int_{G}f(ytxsz)d\mu(t)d\mu(s)-\int_{G}\int_{G}f(ytxs\sigma(z))d\mu(t)d\mu(s)$$
$$-\int_{G}\int_{G}f(yt\sigma(x)sz)d\mu(t)d\mu(s)-\int_{G}\int_{G}f(yt\sigma(x)s\sigma(z))d\mu(t)d\mu(s)$$
$$=\int_{G}\int_{G}f(zt\sigma(y)s\sigma(x))d\mu(t)d\mu(s)+\int_{G}\int_{G}f(zt\sigma(y)sx)d\mu(t)d\mu(s)$$
$$+\int_{G}\int_{G}f(yt\sigma(z)s\sigma(x))d\mu(t)d\mu(s)+\int_{G}\int_{G}f(yt\sigma(z)sx)d\mu(t)d\mu(s)$$
$$-\int_{G}\int_{G}f(zsxt\sigma(y))d\mu(t)d\mu(s)-\int_{G}\int_{G}f(zs\sigma(x)t\sigma(y))d\mu(t)d\mu(s)$$
$$-\int_{G}\int_{G}f(ytxs\sigma(z))d\mu(t)d\mu(s)-\int_{G}\int_{G}f(yt\sigma(x)s\sigma(z))d\mu(t)d\mu(s),$$
where the last identity is due to our assumption that $\mu$ is
$\sigma$-invariant. Using  equation (\ref{eq11}) and the above
computation to obtain $$2f(z)\Psi(x,y)+2f(y)\Psi(x,z)$$
$$=2g(x)\int_{G}f(zt\sigma(y))d\mu(t)+2g(x)\int_{G}f(yt\sigma(z))d\mu(t)$$
$$-\int_{G}\int_{G}f(ytxs\sigma(z))d\mu(t)d\mu(s)-\int_{G}\int_{G}f(zs\sigma(x)t\sigma(y))d\mu(t)d\mu(s)$$
$$-\int_{G}\int_{G}f(zsxt\sigma(y))d\mu(t)d\mu(s)-\int_{G}\int_{G}f(yt\sigma(x)s\sigma(z))d\mu(t)d\mu(s)$$
$$=2g(x)\int_{G}f(zt\sigma(y))d\mu(t)-2g(x)\int_{G}f(zt\sigma(y))d\mu(t)$$
$$+\int_{G}\int_{G}f(zs\sigma(x)t\sigma(y))d\mu(t)d\mu(s)-\int_{G}\int_{G}f(zs\sigma(x)t\sigma(y))d\mu(t)d\mu(s)$$
$$+\int_{G}\int_{G}f(yt\sigma(x)s\sigma(z))d\mu(t)d\mu(s)-\int_{G}\int_{G}f(yt\sigma(x)s\sigma(z))d\mu(t)d\mu(s)=0,$$
 which is due to assumptions that $f$ is odd and $\mu$ is $\sigma$-invariant. So, this implies that
$f(z)\Psi(x,y)+f(y)\Psi(x,z)=0$ for all $x,y,z\in G$ and then we
conclude that there exists $c_x \in \mathbb{C}$ such that $\Psi(x,y)
= c_xf(y)$, $c_xf(z)f(y)+c_xf(y)f(z) = 0$. We get for any $x,y \in
G$ $\Psi(x,y)=0$. Now, since $g$ is even: $g(\sigma(x))=g(x)$ for
all $x\in G$ and $\mu$ is $\sigma$-invariant we obtain
$$2g(x)g(y)=\int_{G}g(xty)d\mu(t)+\int_{G}g(\sigma(y)tx)d\mu(t)$$
$$=\int_{G}g(\sigma(y)tx)d\mu(t)+\int_{G}g(\sigma(y)t\sigma(x))d\mu(t)=2g(\sigma(y))g(x).$$
This means that
$$\int_{G}g(xty)d\mu(t)+\int_{G}g(xt\sigma(y))d\mu(t)=2g(x)g(y)$$
for all $x,y\in G$ and this completes the proof.
\begin{thm}Let $\sigma$ be a continuous involution of $G$. Let $\mu$
be a complex measure with compact support such $\mu\ast\mu=\mu$ and
$\mu$ is $\sigma$-invariant. If the pair $f,g$: $G\longrightarrow
\mathbb{C}$, where  $f\neq 0$ is a  continuous solution of the
generalized Wilson's functional equation (\ref{eq11}) such that
$f_\mu\neq 0.$  Then $g$ is a solution of the generalized
d'Alembert's short functional equation (\ref{eq12}).
\end{thm} In the proof we use the ideas of Stetk\ae r   \cite{st3}. Let $g$ be a non zero fixed solution of
equation (\ref{eq11}). \\We put $W_g$: $=\{f: G\longrightarrow
\mathbb{C}| \;f\; \text{is continuous, satisfies } \;(\ref{eq11}),\;
f_\mu=f\;\text{and}\; f(e)=0\}$. A continuous solution $f$ of
equation (\ref{eq11}) such that $f_\mu=f$  is odd iff $f(e)=0$.
\\If $W_g\neq \{0\}$ then we get the result from Theorem 2.1. Assume
now that $W_g=\{0\}$. Let $f$ be a non zero solution of equation
(\ref{eq11}) such that $f_\mu\neq 0$. The function $f_\mu$ is also a
nonzero solution of equation (\ref{eq11}). Since $W_g=\{0\}$,
$(f_\mu)_\mu=f_\mu$, then $f_\mu(e)\neq0$. Replacing $f_\mu$ by
$f_\mu/f_\mu(e)$ we may assume that $f_\mu(e)=1$. If $h$ is a
continuous solution of equation (\ref{eq11}), then
$h_\mu-h_\mu(e)f_\mu\in W_g=\{0\}$, so $h_\mu=h_\mu(e)f_\mu$. Let
$x\in G$, since $\delta(x)f_\mu(y)=\int_{G}f_\mu(xty)d\mu(t)$ is a
solution of equation (\ref{eq11}), $\mu\ast\mu=\mu$ and
$(\delta(x)f_\mu)_\mu=\delta(x)f_\mu$, then there exists $\psi(x)$
such that $\delta(x)f_\mu=\psi(x)f_\mu$.  In particular for $y=e$ we
have
$(\psi(x)f_\mu)(e)=\psi(x)=(\delta(x)f_\mu)(e)=\int_{G}f_{\mu}(xt)d\mu(t)$,
so we get
\begin{equation}\label{eq900}
\int_{G}f_\mu(xty)d\mu(t)=f_\mu(y)\int_{G}f_{\mu}(xt)d\mu(t)\end{equation}
for all $x,y\in G.$\\ Now, we will show that
$\int_{G}f_{\mu}(xt)d\mu(t)=f_\mu(x)$ for all $x\in G$. Since
$f_\mu$ satisfies (\ref{eq11}) and $\mu\ast\mu=\mu$ then we get
$$\int_{G}f_\mu(xty)d\mu(t)+\int_{G}f_\mu(xt\sigma(y))d\mu(t)=2f_\mu(x)g(y)$$
$$=\int_{G}\int_{G}f_\mu(xsty)d\mu(s)d\mu(t)+\int_{G}\int_{G}f_\mu(xst\sigma(y))d\mu(s)d\mu(t)=2\int_{G}f_\mu(xs)d\mu(s)g(y),$$
which implies that $\int_{G}f_\mu(xs)d\mu(s)=f_\mu(x)$, for all
$x\in G$ and then equation (\ref{eq900}) can be written as follows.
$\int_{G}f_\mu(xty)d\mu(t)=f_{\mu}(x)f_\mu(y)$ for all $x,y\in G.$
Substituting this result into
$$\int_{G}f_\mu(xty)d\mu(t)+\int_{G}f_\mu(xt\sigma(y))d\mu(t)=2f_\mu(x)g(y)$$ we
get $g(y)=\frac{f_\mu(y)+f_\mu(\sigma(y))}{2}$ and we can easily
verify that $g$ satisfies the generalized d'Alembert's short
functional equation (\ref{eq12}). This ends the proof of
theorem.\end{proof} \section{Hyers-Ulam stability of Wilson's
functional equation}In [\cite{elq2}, Corollary 2.7 (iii)] the
authors proved that if the function
$$(x,y)\longrightarrow
\int_{G}f(xty)d\mu(t)+\int_{G}f(xt\sigma(y))d\mu(t)-2f(x)g(y)$$ is
bounded and $f$ is unbounded then $g$ is a solution of the
generalized  d'Alemebert's long functional equation (\ref{eq13}). In
the following theorem under another kind of assumption we get that
$g$ is a solution of the generalized  d'Alembert's short functional
equation (\ref{eq12}).
\begin{thm}Let $\sigma$ be a continuous involution of $G$.
Let $\mu$ be a discrete complex measure with compact support such
that $\mu$ is $\sigma$-invariant and $\mu\ast\mu=\mu$. Let
$\delta\geq 0$. If the pair $f,g$: $G\longrightarrow \mathbb{C}$,
where f is an unbounded $\mu$-biinvariant  continuous solution of
the following inequality
\begin{equation}\label{eq400}
    \mid\int_{G}f(xty)d\mu(t)+\int_{G}f(xt\sigma(y))d\mu(t)-2f(x)g(y)\mid\leq\delta
\end{equation} for all $x,y\in G$. Then $g$ is a solution of the
generalized  d'Alembert's short functional equation (\ref{eq12})
\end{thm}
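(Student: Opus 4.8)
The plan is to run the argument of Theorem~2.1 in the present, inexact setting, with the two hypotheses $f_{\mu}=f$ and $\mu\ast\mu=\mu$ taking over the role played there by the oddness of $f$. Throughout, write $m:=\|\mu\|$, which is finite because $\mu$ is discrete with compact support, and
$$\Phi(x,y):=\int_{G}f(xty)\,d\mu(t)+\int_{G}f(xt\sigma(y))\,d\mu(t)-2f(x)g(y),$$
so that $|\Phi(x,y)|\le\delta$ for all $x,y\in G$. As a first step (the reductions), replacing $y$ by $\sigma(y)$ in $(\ref{eq400})$ and using $\mu_{\sigma}=\mu$ to interchange the two integrals gives $\Phi(x,\sigma(y))-\Phi(x,y)=2f(x)\bigl(g(\sigma(y))-g(y)\bigr)$, hence $|f(x)|\,|g(\sigma(y))-g(y)|\le\delta$ for every $x$; since $f$ is unbounded, $g=g\circ\sigma$. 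Next, replacing $y$ by $sy$ in $(\ref{eq400})$ and integrating in $s$ against $\mu$, the hypotheses $\mu\ast\mu=\mu$ and $f_{\mu}=f$ collapse the resulting iterated integrals of $f$ and yield
$$\int_{G}\Phi(x,sy)\,d\mu(s)=\Phi(x,y)+2f(x)\Bigl(g(y)-\int_{G}g(sy)\,d\mu(s)\Bigr);$$
the left-hand side is bounded by $(m+1)\delta$, so unboundedness of $f$ forces $g_{\mu}=g$, and the same manipulation with $ys$ in place of $sy$ forces $\int_{G}g(ys)\,d\mu(s)=g(y)$. Thus $g$ is $\mu$-biinvariant and even.

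\textbf{The main estimate.} Put, exactly as in Theorem~2.1,
$$\Psi(x,y):=\int_{G}g(xty)\,d\mu(t)+\int_{G}g(\sigma(y)tx)\,d\mu(t)-2g(x)g(y).$$
The aim is to reproduce the computation of $f(z)\Psi(x,y)+f(y)\Psi(x,z)$ from the proof of Theorem~2.1, replacing each use of the exact equation $(\ref{eq11})$ by $(\ref{eq400})$, which contributes an additive summand of the form $\Phi(\cdot,\cdot)$; adjacent $\mu$-integration variables are merged by means of $\mu\ast\mu=\mu$, and $\mu$-averages of $f$ standing at an end of a product are absorbed by $f_{\mu}=f$. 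If this can be arranged so that the cancellations which previously relied on the oddness of $f$ still go through, one obtains an identity
$$f(z)\Psi(x,y)+f(y)\Psi(x,z)=R(x,y,z),$$
where $R$ is a finite combination of $\mu$-integrals of translates of $\Phi$; in particular $|R(x,y,z)|\le C\delta$ with $C=C(m)$ independent of $x,y,z$.

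\textbf{Conclusion from the estimate.} Taking $z=y$ gives $|f(y)|\,|\Psi(x,y)|\le C\delta/2$, whence $|\Psi(x,y)|\le C\delta/(2|f(y)|)$ whenever $f(y)\ne0$. For fixed $x,z$ and any $y$ with $f(y)\ne0$ this yields
$$|\Psi(x,z)|\le\frac{C\delta+|f(z)|\,|\Psi(x,y)|}{|f(y)|}\le\frac{C\delta}{|f(y)|}+\frac{|f(z)|\,C\delta}{2\,|f(y)|^{2}},$$
and the right-hand side tends to $0$ as $|f(y)|\to\infty$, which is possible since $f$ is unbounded; hence $\Psi\equiv0$. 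Finally, from $\Psi\equiv0$ together with $g=g\circ\sigma$ and $\mu_{\sigma}=\mu$ one deduces $\int_{G}g(xty)\,d\mu(t)=\int_{G}g(ytx)\,d\mu(t)$ for all $x,y$, so that $\int_{G}g(\sigma(y)tx)\,d\mu(t)=\int_{G}g(xt\sigma(y))\,d\mu(t)$; substituting this in $\Psi(x,y)=0$ gives $(\ref{eq12})$, exactly as in the closing lines of the proof of Theorem~2.1.

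\textbf{The main obstacle.} The delicate point is the main estimate. One must reorganize the (lengthy) computation of Theorem~2.1 so that, in the absence of oddness of $f$, the algebraic identities still close — this is where $f_{\mu}=f$ and $\mu\ast\mu=\mu$ must do the work — and, crucially, so that every surviving error term is a $\mu$-average of a translate of $\Phi$, hence $O(\delta)$ uniformly in $x,y,z$. The danger is a residual term of the form $g(\cdot)\,\Phi(\cdot,\cdot)$: it would be bounded only if $g$ were bounded, which is not assumed. Ruling such terms out — for instance by grouping the $g(x)g(y)$-contributions symmetrically before $(\ref{eq400})$ is applied, or, alternatively, by first invoking [\cite{elq2}, Corollary~2.7(iii)] to learn that $g$ solves the long equation $(\ref{eq13})$ and then upgrading $(\ref{eq13})$ to $(\ref{eq12})$ using the $\mu$-biinvariance of $f$ and $g$ together with $\mu\ast\mu=\mu$ — is the heart of the argument.
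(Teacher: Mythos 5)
Your reductions and your closing argument are fine as far as they go, but the proof has a genuine hole exactly where you place it: the ``main estimate'' $f(z)\Psi(x,y)+f(y)\Psi(x,z)=R(x,y,z)$ with $|R(x,y,z)|\le C\delta$ is never established, and it is not a routine rerun of Theorem 2.1. The cancellations there use the oddness of $f$ essentially: after the two applications of the (approximate) equation one is left with $2g(x)\bigl[\int_G f(zt\sigma(y))\,d\mu(t)+\int_G f(yt\sigma(z))\,d\mu(t)\bigr]$ plus four double integrals, and these vanish pairwise only because $f\circ\sigma=-f$; neither $f_\mu=f$ nor $\mu\ast\mu=\mu$ produces these cancellations. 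Moreover, the error created when (\ref{eq400}) is used on the product $-4f(z)g(x)g(y)=-2g(y)\cdot 2f(z)g(x)$ is of the form $g(y)\Phi(z,x)$ --- exactly the unbounded residue you flag yourself. Your two suggested remedies do not close the gap: symmetric regrouping does not remove the factor $g$, and invoking [\cite{elq2}, Corollary 2.7(iii)] only yields the long equation (\ref{eq13}), whose passage to the short equation (\ref{eq12}) is precisely the nontrivial content of the theorem and is not automatic; no argument is supplied for it.

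The paper resolves the difficulty by a case analysis that your sketch omits. First it treats the case where $x\mapsto f(x)+f(\sigma(x))$ is bounded (approximate oddness): then the leftover terms are bounded by $2\beta\|\mu\|\,|g(x)|+2\beta\|\mu\|^2$, and one splits again --- if $g$ is unbounded, the inequality forces the exact equation (\ref{eq11}) (superstability, [\cite{elq2}, Corollary 2.7(iii)]) and Theorem 2.2 gives (\ref{eq12}); if $g$ is bounded, $2f(z)\Psi(x,y)+2f(y)\Psi(x,z)$ is merely bounded (not $O(\delta)$, but bounded suffices) and a limiting argument like yours gives $\Psi\equiv0$. In the general case the paper runs the Stetk\ae r-type reduction of Theorem 2.2 in approximate form: it passes to $f_\mu$, considers $h=(\delta_af)_\mu-(\delta_af)_\mu(e)f_\mu$, notes that $h(e)=0$ and $\mu\ast\mu=\mu$ force $h+h\circ\sigma$ to be bounded so the first case applies whenever some such $h$ is unbounded, and otherwise deduces, by a triangle-inequality argument exploiting the unboundedness and the $\mu$-biinvariance of $f$, the exact identity $\int_G f(xty)\,d\mu(t)=f(x)f(y)$, whence $g=(f+f\circ\sigma)/2$ and (\ref{eq12}) follows. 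This second half --- the only place where the hypothesis that $f$ is $\mu$-biinvariant actually intervenes --- is entirely absent from your proposal, so as written the argument does not prove the theorem.
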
\begin{proof} Assume that $f,g$ satisfy inequality (\ref{eq400}) where  $f$ is unbounded on $G$. So, for all $x,y\in G$ we have
$$\mid\int_{G}f(xty)d\mu(t)+\int_{G}f(xt\sigma(y))d\mu(t)-2f(x)g(y)\mid\leq\delta,$$
$$\mid\int_{G}f(xt\sigma(y))d\mu(t)+\int_{G}f(xty)d\mu(t)-2f(x)g(\sigma(y))\mid\leq\delta,$$
and by triangle inequality we find
$$|2f(x)||g(y)-g(\sigma(y))|\leq2\delta$$ for all $x,y\in G$. Since
$f$ is assumed to be unbounded, then we get $g(\sigma(y))=g(y)$ for
all $y\in G.$ In the rest of the proof we use some ideas of the
proof of Theorem 2.1 and Theorem 2.2.\\ First Case: We assume that
the function $x\longmapsto f(x)+f(\sigma(x))$ is a bounded function
on $G$, that is $|f(x)+f(\sigma(x))|\leq\beta$ for some $\beta\geq0$
and for all $x\in G.$ Let
$\Psi(x,y)=\int_{G}g(xty)d\mu(t)+\int_{G}g(\sigma(y)tx)d\mu(t)-2g(x)g(y),\;
x,y\in G.$ We will show  that the function $(x,y,z)\longmapsto
2f(z)\Psi(x,y)+2f(y)\Psi(x,z)$ is a bounded function when $g$ is
bounded. The above computations show that
$$2f(z)\Psi(x,y)+2f(y)\Psi(x,z)$$
$$=2g(x)[\int_{G}f(zt\sigma(y))d\mu(t)+\int_{G}f(yt\sigma(z))d\mu(t)]$$
$$-\int_{G}\int_{G}f(ytxs\sigma(z))d\mu(t)d\mu(s)-\int_{G}\int_{G}f(zs\sigma(x)t\sigma(y))d\mu(t)d\mu(s)$$
$$-\int_{G}\int_{G}f(zsxt\sigma(y))d\mu(t)d\mu(s)-\int_{G}\int_{G}f(yt\sigma(x)s\sigma(z))d\mu(t)d\mu(s).$$
So, we get
\begin{equation}\label{eq300}
|2f(z)\Psi(x,y)+2f(y)\Psi(x,z)|\leq
2\beta\|\mu\||g(x)|+2\beta\|\mu\|^{2}\end{equation}
 for all $x,y,z\in
G$, where $\|\mu\|=Sup\{|<f,\mu>|,\; f\in C(G)\;\|f\|_{\infty}=1\}$.
There are two possibilities. One is:  $g$ is unbounded, then from
[\cite{elq2}, Corollary 2.7 (iii)] and Theorem 2.2, we conclude that
$g$ is a solution of the generalized d'Alembert's short functional
equation (\ref{eq11}).
\\The other possibility is: $g$ is a bounded function, then from (\ref{eq300}) the
function $(x,y,z)\longmapsto 2f(z)\Psi(x,y)+2f(y)\Psi(x,z)$ is a
bounded function on $G$. Having assumed $f$ unbounded, this implies
that then there exists a sequence $(z_n)_{n\in \mathbb{N}}$ such
that $\lim_{n\longrightarrow+\infty}|f(z_n)|=+\infty$. By using
(\ref{eq300}), there exists $c_x\in \mathbb{C}$ such that
$\Psi(x,y)=c_xf(y)$, and  the function $(x,y,z)\longrightarrow
2f(z)c_xf(y)+2f(y)c_xf(z)$ is bounded. By using again the
unboundedness of $f$ we get $c_x=0$ for all $x\in G$. This means
that $2f(z)\Psi(x,y)+2f(y)\Psi(x,z)=0$ for all $x,y,z\in G.$ The
computations above shows that $g$ is a solution of the generalized
 d'Alembert's short functional equation (\ref{eq12}). \\Now, let  $f,g$  be
 functions such that $f$ is unbounded on $G$ and the function $(x,y)\longrightarrow
\int_{G}f(xty)d\mu(t)+\int_{G}f(xt\sigma(y))d\mu(t)-2f(x)g(y)$ is
bounded on $G\times G$. One can verify that the function:
$(x,y)\longrightarrow
\int_{G}f_{\mu}(xty)d\mu(t)+\int_{G}f_{\mu}(xt\sigma(y))d\mu(t)-2f_{\mu}(x)g(y)$
is bounded on $G\times G$. If $f_{\mu}$ is unbounded and
$f_{\mu}(e)=0$, then since $\mu\ast\mu=\mu$ we have
$f_{\mu}(x)+f_{\mu}(\sigma(x))$ is a bounded function, so by using
the precedent proof, we get $g$ satisfies equation (\ref{eq12}). For
the rest of the proof  we fixe $g$ and we assume that
$f_{\mu}(e)\neq0$.
 Replacing $f_{\mu}$ by $f_{\mu}/f_{\mu}(e)$ we may assume that
$f_{\mu}(e)=1$. Consider the  function
$\delta_af(x)=\int_{G}f(atx)d\mu(t)$, $x\in G$. We can easily verify
that the function $(x,y)\longrightarrow
\int_{G}\delta_af(xty)d\mu(t)+\int_{G}\delta_af(xt\sigma(y))d\mu(t)-2\delta_af(x)g(y)$
is bounded on $G\times G$.\\ If there exists $a\in G$ such that
$h=(\delta_af)_\mu-(\delta_af)_{\mu}(e)f_{\mu}$ is unbounded on $G$,
since $h(e)=0$, $h_\mu=h$, the function $(x,y)\longrightarrow
\int_{G}h(xty)d\mu(t)+\int_{G}h(xt\sigma(y))d\mu(t)-2h(x)g(y)$ is
bounded we get $x\longrightarrow h(x)+h(\sigma(x))$ is a bounded
function on $G$. Then from above computations we get that $g$ is a
solution of equation (\ref{eq12}). Now, assume that for all $x\in G$
the function $y\longrightarrow
(\delta_xf)_{\mu}(y)-(\delta_xf)_{\mu}(e)f_{\mu}(y)$ is bounded,
that is there exists $M(x)$ such that
\begin{equation}\label{eq51}
|\int_{G}f(xty)d\mu(t)-\int_{G}f(xt)d\mu(t)\int_{G}f(ty)d\mu(t)|\leq
M(x)
\end{equation} for all $x,y\in G.$ Since $f$ is assumed to be $\mu$-biinvariant, that is  $\int_{G}f(xt)d\mu(t)=\int_{G}f(tx)d\mu(t)=f(x)$ for all $x\in G$
 then inequality (\ref{eq51}) can be replaced by
\begin{equation}\label{eq52}
|\int_{G}f(xty)d\mu(t)-f(x)f(y)|\leq M(x)
\end{equation} for all $x,y\in G.$ Indeed in view of  triangle inequality  we
get for all $x,y,z \in G$ that
$$|f(z)||\int_{G}f(xty)d\mu(t)-f(x)f(y)|\leq|-\int_{G}\int_{G}f(xtysz)d\mu(t)d\mu(s)+\int_{G}f(xty)d\mu(t)f(z)|$$
$$+|\int_{G}\int_{G}f(xtysz)d\mu(t)d\mu(s)-f(x)\int_{G}f(ysz)d\mu(s)|+|f(x)|\int_{G}f(ysz)d\mu(s)-f(y)f(z)|$$
$$\leq\int_{G}M(xty)d|\mu|(t)+\|\mu\|M(x)+|f(x)|M(y).$$Since $f$ is
assumed to be unbounded, then we get
$\int_{G}f(xty)d\mu(t)=f(x)f(y)$ for all $x,y\in G.$ Substituting
this result into inequality (\ref{eq400}) we get that
$$|f(x)||f(y)+f(\sigma(y))-2g(y)|\leq \delta$$ for all $x,y\in G$.
Since $f$ is unbounded then we obtain
$g(y)=\frac{f(y)+f(\sigma(y))}{2}$ for all $y\in G$, from which a
simple computation shows that $g$ is a solution of the generalized
d'Alembert's short functional equation (\ref{eq12}). This completes
the proof.
\end{proof}The first part of the above proof proves the following
corollary.
\begin{cor}
Let $\sigma$ be a continuous involution of $G$. Let $\mu$ be a
complex measure with compact support such that $\mu$ is
$\sigma$-invariant and $\mu\ast\mu=\mu$. Let $\delta\geq 0$. If the
pair $f,g$: $G\longrightarrow \mathbb{C}$, where $f$ is an unbounded
continuous solution of the following inequality
\begin{equation}\label{eq31}
    \mid\int_{G}f(xty)d\mu(t)+\int_{G}f(xt\sigma(y))d\mu(t)-2f(x)g(y)\mid\leq\delta
\end{equation} for all $x,y\in G$ and such that $x\longrightarrow f(x)+f(\sigma(x))$ is a bounded function. Then $g$ is a solution of the
generalized  d'Alembert's short functional equation (\ref{eq12}).
\end{cor}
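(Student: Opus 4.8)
The plan is to obtain Corollary 3.2 as a direct consequence of the "First Case" analysis already carried out inside the proof of Theorem 3.1, with the extra generality that $\mu$ need only satisfy $\mu\ast\mu=\mu$ and $\sigma$-invariance (no discreteness, no $\mu$-biinvariance of $f$). First I would record, exactly as at the start of the proof of Theorem 3.1, that replacing $y$ by $\sigma(y)$ in the inequality (\ref{eq31}) and using the triangle inequality gives $2|f(x)|\,|g(y)-g(\sigma(y))|\le 2\delta$ for all $x,y\in G$; since $f$ is unbounded this forces $g(\sigma(y))=g(y)$ for all $y\in G$, i.e. $g$ is even. This is the only place the unboundedness of $f$ is used at the outset, and it is needed so that the function $\Psi$ below has the right symmetry properties.

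Next I would introduce $\Psi(x,y)=\int_{G}g(xty)\,d\mu(t)+\int_{G}g(\sigma(y)tx)\,d\mu(t)-2g(x)g(y)$, just as in Theorem 2.1 and Theorem 3.1, and invoke the algebraic identity established in the course of those proofs: using (\ref{eq31}) twice to expand $2f(z)\Psi(x,y)+2f(y)\Psi(x,z)$, together with the $\sigma$-invariance of $\mu$ and the evenness of $g$, one arrives at an expression for $2f(z)\Psi(x,y)+2f(y)\Psi(x,z)$ as $2g(x)\big[\int_G f(zt\sigma(y))\,d\mu(t)+\int_G f(yt\sigma(z))\,d\mu(t)\big]$ minus four double integrals of $f$ over products of five group elements. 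The point is that each of these $f$-terms, when $\Psi$ is replaced by the actual error in (\ref{eq31}), differs from the corresponding exact Wilson term by a bounded amount; collecting these bounded discrepancies shows that $(x,y,z)\longmapsto 2f(z)\Psi(x,y)+2f(y)\Psi(x,z)$ differs from an expression involving only $f(x)+f(\sigma(x))$-type combinations and $g$ by a bounded quantity. Using the hypothesis that $x\mapsto f(x)+f(\sigma(x))$ is bounded, say $|f(x)+f(\sigma(x))|\le\beta$, exactly the estimate (\ref{eq300}) of the paper,
$$|2f(z)\Psi(x,y)+2f(y)\Psi(x,z)|\le 2\beta\|\mu\|\,|g(x)|+2\beta\|\mu\|^{2},$$
holds here as well, for all $x,y,z\in G$.

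From (\ref{eq300}) I would split into the same two cases as in Theorem 3.1. If $g$ is unbounded, then by [\cite{elq2}, Corollary 2.7 (iii)] $g$ satisfies the long d'Alembert equation (\ref{eq13}), and Theorem 2.2 upgrades this to the short equation (\ref{eq12}) — here one should note that the hypotheses of Theorem 2.2 are exactly $\mu\ast\mu=\mu$ and $\sigma$-invariance of $\mu$, which are precisely what Corollary 3.2 assumes, so no discreteness is needed. If instead $g$ is bounded, then the right-hand side of (\ref{eq300}) is bounded in $(x,y,z)$, so $(x,y,z)\mapsto 2f(z)\Psi(x,y)+2f(y)\Psi(x,z)$ is bounded; picking a sequence $(z_n)$ with $|f(z_n)|\to\infty$ and dividing, we get $\Psi(x,y)=c_xf(y)$ for some $c_x\in\mathbb C$, whence $4c_xf(y)f(z)$ is bounded and the unboundedness of $f$ gives $c_x=0$, i.e. $\Psi\equiv 0$. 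Finally, since $g$ is even and $\mu$ is $\sigma$-invariant, $\Psi\equiv 0$ together with the substitution-and-symmetrization computation at the end of the proof of Theorem 2.1 ($2g(x)g(y)=\int_G g(xty)\,d\mu(t)+\int_G g(\sigma(y)tx)\,d\mu(t)=\int_G g(\sigma(y)tx)\,d\mu(t)+\int_G g(\sigma(y)t\sigma(x))\,d\mu(t)=2g(\sigma(y))g(x)$) yields (\ref{eq12}).

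The main obstacle is purely bookkeeping rather than conceptual: one must check that replacing the exact Wilson expression by the $\delta$-perturbed one in the long chain of double-integral manipulations of Theorem 2.1 only introduces uniformly bounded errors — the key being that wherever the exact proof used $\int_G f(xty)\,d\mu(t)+\int_G f(xt\sigma(y))\,d\mu(t)=2f(x)g(y)$, the inequality version costs at most $\delta$, and the boundedness of $x\mapsto f(x)+f(\sigma(x))$ absorbs the remaining terms, with $\|\mu\|<\infty$ (valid since $\mu$ has compact support) controlling the convolution factors. Since all of this computation is already displayed in the proof of Theorem 3.1, the proof of the corollary amounts to the single sentence that the First Case argument there used neither the discreteness of $\mu$ nor the $\mu$-biinvariance of $f$, only $\mu\ast\mu=\mu$, $\sigma$-invariance of $\mu$, unboundedness of $f$, and boundedness of $x\mapsto f(x)+f(\sigma(x))$; hence it proves Corollary 3.2 verbatim.
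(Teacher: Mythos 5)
Your proposal is, in substance, the paper's own proof: the paper disposes of this corollary with the single remark that the first part (the ``First Case'') of the proof of Theorem 3.1 proves it, and your observation that this argument uses only $\mu\ast\mu=\mu$, the $\sigma$-invariance of $\mu$, the unboundedness of $f$, and the boundedness of $x\mapsto f(x)+f(\sigma(x))$ --- neither the discreteness of $\mu$ nor the $\mu$-biinvariance of $f$ --- is exactly the intended point. Your handling of the bounded-$g$ subcase (the estimate (\ref{eq300}), the sequence $(z_n)$ with $|f(z_n)|\to\infty$, $\Psi(x,y)=c_xf(y)$, $c_x=0$, and the final symmetrization using the evenness of $g$ and the $\sigma$-invariance of $\mu$) follows the paper verbatim, and you are if anything more careful than the paper about the $\delta$-sized discrepancies introduced when the exact Wilson identity is replaced by the inequality.

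The one step that, as written, would not go through is your unbounded-$g$ subcase. You assert that [\cite{elq2}, Corollary 2.7 (iii)] gives that $g$ satisfies the long equation (\ref{eq13}) and that Theorem 2.2 then ``upgrades'' (\ref{eq13}) to (\ref{eq12}); but Theorem 2.2 is not such an upgrading device --- its hypothesis is that the pair $(f,g)$ satisfies the exact Wilson equation (\ref{eq11}) with $f\neq0$ and $f_\mu\neq0$. The way the paper combines the two citations (see the proof of Corollary 3.3) is: when $g$ is unbounded, [\cite{elq2}, Corollary 2.7 (iii)] yields that $(f,g)$ satisfies (\ref{eq11}) exactly, and only then does Theorem 2.2 deliver (\ref{eq12}). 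If you repair the chain this way you should also record where $f_\mu\neq0$ comes from: in Theorem 3.1 it is supplied by the $\mu$-biinvariance of $f$, which the corollary drops; the paper itself glosses over this point, but a complete write-up of this subcase needs either that hypothesis or an argument replacing it.
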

\begin{cor}
Let $\sigma$ be a continuous involution of $G$. Let $\mu$ be a
complex measure with compact support such that $\mu$ is
$\sigma$-invariant and $\mu\ast\mu=\mu$. Let $\delta\geq 0$. If the
pair $f,g$: $G\longrightarrow \mathbb{C}$, where $g$ is an unbounded
continuous solution of the following inequality
\begin{equation}\label{eq31}
    \mid\int_{G}f(xty)d\mu(t)+\int_{G}f(xt\sigma(y))d\mu(t)-2f(x)g(y)\mid\leq\delta
\end{equation} for all $x,y\in G$. Then the pair $f,g$ is  a continuous  solution of the functional equation
(\ref{eq11}). Furthermore, if $f\neq 0$ and $f_\mu\neq 0$, then $g$
satisfies the geneleralized d'Alembert's short functional equation
(\ref{eq12}).
\end{cor}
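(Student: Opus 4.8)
The plan is to argue in two stages: first show that the inequality (\ref{eq31}) is in fact forced to be the equality (\ref{eq11}) for the pair $f,g$, and then deduce the last assertion from Theorem~2.2.

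\emph{Stage 1: the pair $f,g$ solves (\ref{eq11}).} If $f=0$ there is nothing to prove, so assume $f\ne 0$. If $f$ were bounded, then $\int_Gf(xty)\,d\mu(t)$ and $\int_Gf(xt\sigma(y))\,d\mu(t)$ would both be bounded by $\|f\|_\infty\|\mu\|$, so (\ref{eq31}) would give $\sup_{x,y}|f(x)g(y)|<\infty$; choosing $x_0$ with $f(x_0)\ne 0$ this would make $g$ bounded, a contradiction. Hence $f$ is unbounded. Replacing $y$ by $\sigma(y)$ in (\ref{eq31}), using $\sigma\circ\sigma=\mathrm{id}$ and the triangle inequality, gives $|2f(x)(g(y)-g(\sigma(y)))|\le 2\delta$, so unboundedness of $f$ forces $g\circ\sigma=g$, exactly as at the beginning of the proof of Theorem~3.1. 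Moreover, since $f$ is unbounded and $(x,y)\mapsto\int_Gf(xty)\,d\mu(t)+\int_Gf(xt\sigma(y))\,d\mu(t)-2f(x)g(y)$ is bounded, [\cite{elq2}, Corollary~2.7(iii)] already tells us that $g$ solves the long d'Alembert equation (\ref{eq13}).

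\emph{Stage 1, main step.} Write $E(x,y)=\int_Gf(xty)\,d\mu(t)+\int_Gf(xt\sigma(y))\,d\mu(t)-2f(x)g(y)$, so $|E(x,y)|\le\delta$; the goal is $E\equiv 0$. A useful preliminary is that $f$ is right $\mu$-invariant: replacing $x$ by $xt$ in (\ref{eq31}), integrating against $d\mu(t)$, using $\mu\ast\mu=\mu$, and comparing with the original relation yields $|2g(y)(f(x)-\int_Gf(xt)\,d\mu(t))|\le\delta(\|\mu\|+1)$, whence $\int_Gf(xt)\,d\mu(t)=f(x)$ because $g$ is unbounded; this simplifies some of the double integrals below. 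Now fix $x$ and expand $\int_G\int_Gf\bigl(x\,t\,w_y\,s\,w_z\bigr)\,d\mu(t)\,d\mu(s)$ in two ways, interchanging the roles of the $y$-word $w_y$ and the $z$-word $w_z$, using (\ref{eq31}) repeatedly exactly as in the computation of $2f(z)\Psi(x,y)+2f(y)\Psi(x,z)$ in the proof of Theorem~2.1; subtracting the two expansions gives an identity roughly of the form $2g(z)E(x,y)-2g(y)E(x,z)=R(x,y,z)+R'(x,y,z)$, where $R$ is a difference of double $\mu$-integrals of $f$ and $R'$ gathers terms of type $\int_GE(\cdots)\,d\mu$ and $g(y)E(x,z)$. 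For fixed $x,y$ the term $R'$ is a bounded function of $z$; and $R$ is forced to be bounded in $z$ as well, by collapsing the double integrals through $\mu\ast\mu=\mu$ and matching the remaining $\sigma$-twisted pieces using the $\sigma$-invariance of $\mu$, the evenness of $g$, and the equation (\ref{eq13}) just obtained for $g$. Hence $2g(z)E(x,y)$ is bounded uniformly in $z$ for each fixed $x,y$, and since $g$ is unbounded we conclude $E\equiv 0$, i.e.\ $f,g$ is a continuous solution of (\ref{eq11}). The step I expect to be the main obstacle is precisely the control of $R$: since $G$ need not be abelian and $\sigma(uv)=\sigma(v)\sigma(u)$, the relevant double $\mu$-integrals do not simply match after one regrouping (as they would in the classical commutative case), and forcing them to cancel up to a $z$-bounded remainder is exactly the job performed by the idempotency and $\sigma$-invariance of $\mu$ together with (\ref{eq13}), in the same spirit as the manipulations in Theorems~2.1 and 2.2.

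\emph{Stage 2: the short equation for $g$.} Finally, assume in addition $f\ne 0$ and $f_\mu\ne 0$. By Stage~1 the pair $f,g$ solves (\ref{eq11}) exactly, while $\mu\ast\mu=\mu$ and the $\sigma$-invariance of $\mu$ hold by hypothesis; thus Theorem~2.2 applies directly and shows that $g$ is a solution of the generalized d'Alembert short functional equation (\ref{eq12}). This completes the proof.
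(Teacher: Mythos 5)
Your Stage 2 coincides with the paper: once the pair $(f,g)$ is known to satisfy (\ref{eq11}) exactly, the hypotheses $f\neq 0$, $f_\mu\neq 0$, $\mu\ast\mu=\mu$ and $\mu_\sigma=\mu$ let Theorem 2.2 give (\ref{eq12}). The problem is Stage 1. The first assertion of the corollary (the superstability statement: $g$ unbounded forces the inequality to become the exact equation (\ref{eq11})) is precisely what the paper imports wholesale from \cite{elq2}, Corollary 2.7 (iii); the paper's entire proof is ``use \cite{elq2}, Corollary 2.7 (iii) and Theorem 2.2''. You instead use the cited result only for the weaker conclusion that $g$ satisfies the long equation (\ref{eq13}) and then try to re-derive the exact equation by hand --- and that derivation is not carried out. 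The identity ``roughly of the form $2g(z)E(x,y)-2g(y)E(x,z)=R+R'$'' is never actually established, and the decisive point, that $R$ is bounded in $z$, is asserted rather than proved; you yourself flag it as the expected main obstacle. This is not a routine verification: in the noncommutative setting the cross terms do not pair into Wilson-type expressions in the second variable (e.g. terms of the shape $\int\int f(xtys\sigma(z))\,d\mu\,d\mu$ versus $\int\int f(xt\sigma(y)sz)\,d\mu\,d\mu$ cannot be matched, since $\sigma(y s z)=\sigma(z)\sigma(s)\sigma(y)$ reverses the order), which is exactly why such superstability results on general groups are the subject of the separate papers \cite{elq2}, \cite{elq3} rather than a one-line computation.

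A second, related defect: you propose to run the cancellation ``exactly as in the computation of $2f(z)\Psi(x,y)+2f(y)\Psi(x,z)$ in the proof of Theorem 2.1'', but that computation uses the exact equation (\ref{eq11}) for $f$ (which is what you are trying to prove) together with the oddness of $f$, and its approximate variant in Theorem 2.3 needs $x\mapsto f(x)+f(\sigma(x))$ bounded --- none of these hypotheses is available here, and moreover that computation bounds the deviation $\Psi$ of $g$ with $f$ outside, whereas you need the roles reversed ($g$ outside, the deviation $E$ of $f$ inside). The preliminary reductions you do give (if $f\neq 0$ then $f$ is unbounded, $g\circ\sigma=g$, and $f_\mu$-type right invariance of $f$) are correct, but without the missing cancellation the first assertion of the corollary remains unproven. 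The honest repair is the paper's route: quote the full statement of \cite{elq2}, Corollary 2.7, for the case $g$ unbounded (which yields that $(f,g)$ satisfies (\ref{eq11}) exactly), and then apply Theorem 2.2 as you do in Stage 2.
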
\begin{proof}We use [\cite{elq2}, Corollary 2.7, (iii)] and Theorem 2.2.\end{proof}
In \cite{elq3} on general groups and under the hypotheses that the
function $(x,y)\longrightarrow f(xy)+f(x\sigma(y))-2f(x)g(y)$ is a
bounded function on $G\times G$, the function $g$ satisfies
d'Alembert's long functional equation
\begin{equation}\label{eq61}
g(xy)+g(x\sigma(y))+g(yx)+g(\sigma(y)x)=4g(x)g(y),\;x,y\in
G\end{equation}
 The following corollaries finishes the work on the
Hyers Ulam stability of Wilson's functional equation
\begin{equation}\label{eq62}
f(xy)+f(x\sigma(y))=2f(x)g(y),\;x,y\in G \end{equation}
 on groups when
the solutions are unbounded functions.\\
If we let $\mu=\delta_e$: The dirac measure concentrated at the
identity element of G, we apply Theorem 2.3 to $\mu=\delta_e$ to
obtain the following result which has been proved by several authors
in the case where $G$ is an abelian group.
\begin{cor} Let $G$ be a group.
Let $\sigma$ be an involution of $G$.  Let $\delta\geq 0$. If the
pair $f,g$: $G\longrightarrow \mathbb{C}$, where $f$ is an unbounded
solution of the following inequality
\begin{equation}\label{eq54}
    \mid f(xy)+f(x\sigma(y))-2f(x)g(y)\mid\leq\delta
\end{equation} for all $x,y\in G$. Then $g$ is a solution of  d'Alembert's short functional equation
\begin{equation}\label{eq63}
g(xy)+g(x\sigma(y))=2g(x)g(y),\;x,y\in G \end{equation}
\end{cor}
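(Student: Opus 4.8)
The plan is to obtain this corollary as the special case $\mu=\delta_e$ of Theorem 2.3, where $\delta_e$ is the Dirac measure concentrated at the identity $e$ of $G$. First I would record that $\delta_e$ is a discrete complex measure whose support is the compact singleton $\{e\}$, so the framework of Theorem 2.3 is available. Next I would verify the two structural hypotheses imposed on $\mu$ there. For $\sigma$-invariance, recall that an involution is in particular an anti-automorphism, hence $\sigma(e)=e$; therefore $\langle(\delta_e)_\sigma,h\rangle=\langle\delta_e,h\circ\sigma\rangle=h(\sigma(e))=h(e)=\langle\delta_e,h\rangle$ for all $h\in C(G)$, i.e. $(\delta_e)_\sigma=\delta_e$. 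For idempotency, $\langle\delta_e\ast\delta_e,h\rangle=\int_G\int_G h(ts)\,d\delta_e(t)\,d\delta_e(s)=h(ee)=h(e)=\langle\delta_e,h\rangle$, so $\delta_e\ast\delta_e=\delta_e$.

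Since the corollary is stated for an abstract group $G$, I would equip $G$ with the discrete topology; then $G$ is a locally compact group and every function $G\longrightarrow\mathbb{C}$ is continuous, so the continuity hypotheses in Theorem 2.3 are vacuous. For $\mu=\delta_e$ one computes $\int_G f(tx)\,d\delta_e(t)=f(x)=\int_G f(xt)\,d\delta_e(t)$, so $f$ is automatically $\mu$-biinvariant and $f_\mu=f$; in particular the unbounded function $f$ of the corollary is an unbounded $\mu$-biinvariant continuous solution in the sense required by Theorem 2.3. Finally, with this choice of $\mu$ the inequality (\ref{eq400}) becomes $|f(xy)+f(x\sigma(y))-2f(x)g(y)|\le\delta$, which is exactly (\ref{eq54}), and equation (\ref{eq12}) becomes $g(xy)+g(x\sigma(y))=2g(x)g(y)$, which is exactly (\ref{eq63}).

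Assembling these observations, Theorem 2.3 applied to $\mu=\delta_e$ gives at once that $g$ is a solution of (\ref{eq63}), which is the assertion. There is no substantial obstacle here: the mathematical content lies entirely in Theorem 2.3, and the only points needing a moment's attention are the identity $\sigma(e)=e$ underlying the $\sigma$-invariance of $\delta_e$, the use of the discrete topology to place an abstract group inside the locally compact setting, and the routine bookkeeping by which the integral statements collapse to pointwise ones when $\mu$ is a point mass. Should one prefer to avoid passing through the discrete topology, one can instead replay the first part of the proof of Theorem 2.3 verbatim with all integrals replaced by point evaluations: replacing $y$ by $\sigma(y)$ in (\ref{eq54}) and using the triangle inequality together with the unboundedness of $f$ yields $g\circ\sigma=g$, after which the three-variable identity used there shows that $(x,y,z)\longmapsto 2f(z)\Psi(x,y)+2f(y)\Psi(x,z)$ is bounded, forcing $\Psi\equiv 0$ and hence (\ref{eq63}).
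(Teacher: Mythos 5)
Your proposal is correct and follows the paper's own route: the paper derives this corollary precisely by applying Theorem 2.3 with $\mu=\delta_e$, and your verifications (discrete topology, $\sigma$-invariance and idempotency of $\delta_e$, automatic $\mu$-biinvariance of $f$) simply make explicit the bookkeeping the paper leaves implicit.
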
Now, we can ready to formulate the Hyers-Ulam stability of
the classical Wilson's functional equation (\ref{eq62}) on groups.
The following result was obtained by Kannappan and Kim \cite{k2}
under the condition that $f$ is even and $f$ satisfies the Kannappan
condition $f(xyz)=f(yxz)$ for all $x,y,z\in G$.
\begin{cor}
Let  $\delta\geq 0$, $G$  a group,  $\sigma$ an involution of $G$.
Suppose that the pair $f,g:G\rightarrow \mathbb{C}$ satisfies
\begin{equation}\label{eq31}
|f(xy)+f(x\sigma(y))-2f(x)g(y)|\leq \delta,\;\text{for all}\;x,y\in
G.
\end{equation} Under these assumptions the following statements
hold:\\
(1) If $f$ is unbounded, then $g$ satisfies d'Alembert's short
functional equation (\ref{eq63})\\ (2) If $g$ is unbounded and
$f\neq 0$ then the pair ($f,g$) satisfies  Wilson's  functional
equation (\ref{eq62}) and  $g$ satisfies  d'Alembert's short
functional equation (\ref{eq63}).
\end{cor}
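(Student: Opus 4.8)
The plan is to obtain this corollary from the general results of Sections~2 and~3 by specializing the idempotent measure $\mu$ to the Dirac mass $\delta_e$ at the identity $e$ of $G$. First I would record the elementary facts that make the specialization legitimate. Since $G$ is only assumed to be an abstract group, equip it with the discrete topology; then $G$ is locally compact and every complex-valued function on it is continuous, so all continuity hypotheses in the quoted statements become vacuous. The measure $\delta_e$ is discrete with compact support $\{e\}$, it is idempotent ($\delta_e\ast\delta_e=\delta_e$), and since $\sigma(e)=e$ it is $\sigma$-invariant. Moreover $\int_G h(xty)\,d\delta_e(t)=h(xy)$ for every $h$ and all $x,y$, so $h_{\delta_e}=h$, every $h$ is automatically $\delta_e$-biinvariant, the generalized equations (\ref{eq11}) and (\ref{eq12}) reduce to the classical equations (\ref{eq62}) and (\ref{eq63}), and the generalized inequality (\ref{eq400}) reduces to exactly the inequality hypothesized in the present corollary.

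For statement~(1), assume $f$ unbounded. Then all the hypotheses of the stability theorem of Section~3 --- $\mu$ discrete, $\mu\ast\mu=\mu$, $\mu$ $\sigma$-invariant, $f$ $\mu$-biinvariant --- hold for $\mu=\delta_e$ by the remarks above; indeed this is precisely the corollary obtained just above by applying that theorem to $\mu=\delta_e$. Hence $g$ solves (\ref{eq12}), which in this setting is d'Alembert's short equation (\ref{eq63}).

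For statement~(2), assume $g$ unbounded and $f\neq0$. Here I would invoke the corollary concerning unbounded $g$ (the one deduced from [\cite{elq2}, Corollary~2.7(iii)] and Theorem~2.2): specialized to $\mu=\delta_e$ it first yields that the pair $(f,g)$ is an \emph{exact} solution of (\ref{eq11}), that is, of Wilson's equation (\ref{eq62}); and since $f_{\delta_e}=f\neq0$, its supplementary conclusion gives that $g$ solves (\ref{eq12}), i.e.\ (\ref{eq63}). Equivalently, once $(f,g)$ is known to satisfy (\ref{eq62}) exactly, one may finish by applying Theorem~2.2 directly with $\mu=\delta_e$, whose hypotheses $\mu\ast\mu=\mu$, $\mu$ $\sigma$-invariant, $f\neq0$ and $f_\mu\neq0$ are all met.

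I do not expect a genuine obstacle: no new analytic estimate is required, since the full content has already been established in the $\mu$-weighted setting. The only care needed is bookkeeping --- checking that the potentially restrictive hypotheses of the general theorems (idempotence $\mu\ast\mu=\mu$, discreteness, and above all the $\mu$-biinvariance of $f$ demanded in the stability theorem) are automatic for $\delta_e$, and that the specialized equations are literally (\ref{eq62}) and (\ref{eq63}). The substantive point behind part~(2) --- boundedness of the Wilson defect together with unboundedness of $g$ forcing that defect to vanish identically, a superstability phenomenon --- lives in the cited corollary and need only be quoted.
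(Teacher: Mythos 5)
Correct, and essentially the paper's own argument: part (1) is precisely the specialization of the Section~3 stability theorem to $\mu=\delta_e$, which the paper records as the corollary immediately preceding this one, and part (2) is the same superstability reduction for unbounded $g$ combined with Theorem~2.2. The only difference is bibliographical: for part (2) the paper quotes [\cite{elq3}, Theorem 2.2 (b)] to get the exact Wilson solution, whereas you route through the paper's corollary on unbounded $g$ (resting on [\cite{elq2}, Corollary 2.7 (iii)]) specialized at $\mu=\delta_e$ and use $f_{\delta_e}=f\neq 0$; the two reductions are interchangeable here.
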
\begin{proof} We use [\cite{elq3}, Theorem 2.2] and Theorem 2.2.\end{proof}
Recently, the authors proved the following result.
\begin{thm}\cite{elq3} \label{thm21} Let  $\delta\geq 0$, $G$  a group, $\chi$  a unitary
character of $G$ and $\sigma$  an involution of $G$ such that
$\chi(x\sigma(x))=1$ for all $x\in G$. Suppose that the pair
$f,g:G\rightarrow \mathbb{C}$ satisfies
\begin{equation}\label{eq700}
|f(xy)+\chi(y)f(x\sigma(y))-2f(x)g(y)|\leq \delta,\;\text{for all
}\;x,y\in G.
\end{equation} Under these assumptions the following statments
hold:\\
 (a) If $f$ is unbounded then $g$ satisfies d'Alembert's long functional
equation
\begin{equation}\label{eq701}
g(xy)+\chi(y)g(x\sigma(y))+g(yx)+\chi(y)g(\sigma(y)x)=4g(x)g(y),\;x,y\in
G\end{equation}  (b) If  $g$ is unbounded and $f\neq0$, then the
pair ($f,g$) satisfies Wilson's functional equation
\begin{equation}\label{eq702}
f(xy)+\chi(y)f(x\sigma(y))=2f(x)g(y)\; x,y\in G \end{equation} and
$g$ satisfies the d'Alembert's short functional equation
\begin{equation}\label{eq703} g(xy)+\chi(y)g(x\sigma(y))=2g(x)g(y)\;
x,y\in G. \end{equation}
\end{thm}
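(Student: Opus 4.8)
The plan is to carry over, with the unitary character $\chi$ inserted throughout, the scheme that proves Theorems 2.1--2.3 in the special case $\mu=\delta_e$, replacing exact identities by identities valid up to a bounded error. Write $\Phi(x,y):=f(xy)+\chi(y)f(x\sigma(y))-2f(x)g(y)$, so that $|\Phi(x,y)|\le\delta$ for all $x,y$. First I would record two reductions. Replacing $y$ by $\sigma(y)$ in (\ref{eq700}), multiplying by $\chi(y)$ and using that $\chi$ is multiplicative with $\chi(y)\chi(\sigma(y))=\chi(y\sigma(y))=1$ together with the fact that $\sigma$ reverses products, one obtains $|2f(x)(g(y)-\chi(y)g(\sigma(y)))|\le 2\delta$; hence, as soon as $f$ is unbounded, $\chi(y)g(\sigma(y))=g(y)$ for all $y$. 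Secondly, if $g$ is unbounded and $f\neq 0$ then $f$ is automatically unbounded, for otherwise $2f(x)g(y)$ would be bounded on $G\times G$ and, choosing $x_0$ with $f(x_0)\neq 0$, this would force $g$ bounded. Thus in both statements of the theorem we may assume $f$ unbounded and $\chi(y)g(\sigma(y))=g(y)$.

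For (a), with $f$ unbounded, set $\Psi(x,y):=g(xy)+\chi(y)g(\sigma(y)x)-2g(x)g(y)$. The key step is to prove that, for each fixed $x$, the function $(y,z)\mapsto f(z)\Psi(x,y)+f(y)\Psi(x,z)$ is bounded on $G\times G$. To see this I would expand $f(z)\Psi(x,y)+f(y)\Psi(x,z)$ and replace every product $2f(u)g(v)$ that appears by $f(uv)+\chi(v)f(u\sigma(v))$ at the cost of an error of size $\le\delta$ each time; crucially the two occurrences of a $g(x)g(\cdot)$ product can each be opened so that the resulting error is proportional to $|g(x)|$ (not to $|g(y)|$ or $|g(z)|$), so that for fixed $x$ the total error is a constant multiple of $\delta(1+|g(x)|)$. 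The remaining $f$-values of four- and five-letter words then cancel in pairs exactly as in the computation of Theorem 2.1, once one uses that $\chi$ is a homomorphism with $\chi(v\sigma(v))=1$ and that $\sigma$ is an anti-automorphism. Granting this boundedness, the unboundedness of $f$ forces $\Psi(x,\cdot)\equiv 0$ for every $x$ (if $\Psi(x,y_0)\neq 0$ with $f(y_0)\neq 0$, then $\Psi(x,z)=-\Psi(x,y_0)f(y_0)^{-1}f(z)+(\text{bounded})$, and feeding this back contradicts the unboundedness of $f$; the case $f(y)=0$ is immediate from boundedness of $f(z)\Psi(x,y)$ in $z$). Hence $g(xy)+\chi(y)g(\sigma(y)x)=2g(x)g(y)$ for all $x,y$; replacing $y$ by $\sigma(y)$, multiplying by $\chi(y)$ and using $\chi(y)g(\sigma(y))=g(y)$ gives $\chi(y)g(x\sigma(y))+g(yx)=2g(x)g(y)$, and adding the two identities yields exactly the long d'Alembert equation (\ref{eq701}).

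For (b), with $g$ unbounded and $f\neq 0$, we already know $f$ is unbounded, so (a) gives that $g$ satisfies (\ref{eq701}). It then remains to upgrade the inequality to the exact Wilson equation (\ref{eq702}): I would fix $x,y$ and expand $2g(z)\Phi(x,y)$, replacing each value $2g(z)f(u)$ by $f(uz)+\chi(z)f(u\sigma(z))$ and then iterating, up to $O(\delta)$ and up to the (constant, since $y$ is fixed) errors proportional to $|g(y)|$; the resulting $f$-values of long words are then reconciled by means of the long d'Alembert equation (\ref{eq701}) for $g$ and the relation $\chi(y)g(\sigma(y))=g(y)$, so that $z\mapsto 2g(z)\Phi(x,y)$ turns out to be bounded. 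Since $g$ is unbounded this forces $\Phi(x,y)=0$, i.e. the pair satisfies (\ref{eq702}). Finally, the exact Wilson equation (\ref{eq702}) together with $f_{\delta_e}=f\neq 0$ yields, by the argument of Theorem 2.2 (whose proof uses only $\mu\ast\mu=\mu$ and $\sigma$-invariance, both of which hold for $\mu=\delta_e$) carried through with the character $\chi$, that $g$ satisfies the short d'Alembert equation (\ref{eq703}).

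The step I expect to cause the most trouble is the two cancellation computations: the boundedness of $f(z)\Psi(x,y)+f(y)\Psi(x,z)$ in (a) and, above all, the boundedness of $2g(z)\Phi(x,y)$ in (b). Both require keeping exact track of the $\chi$-coefficients attached to many $f$-values of four- and five-letter words and checking that they match up; the algebraic facts that make this possible are that $\chi$ is a homomorphism with $\chi(u\sigma(u))=1$ and that $\sigma$ reverses the order in a product, so that terms of the shape $\chi(z)f(u\sigma(z)v)$ and $\chi(z)f(uv\sigma(z))$ can be paired after applying $\sigma$. The genuinely delicate point is in (b): at that stage only the \emph{long} d'Alembert equation for $g$ is available, so the pieces coming from non-commuting products (such as $f(xyz)$ versus $f(xzy)$) cannot be cancelled against each other directly and must instead be routed through (\ref{eq701}); making this reconciliation close up is the heart of the matter.
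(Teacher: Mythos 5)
Your preliminary reductions (that $f$ unbounded forces $\chi(y)g(\sigma(y))=g(y)$, and that $g$ unbounded with $f\neq 0$ forces $f$ unbounded) are correct, and so is the passage from $\Psi\equiv 0$ to the long equation (\ref{eq701}). The genuine gap is in the central cancellation step of (a): you claim that, after opening every product $2f(u)g(v)$ by means of (\ref{eq700}), the surviving $f$-values of four- and five-letter words ``cancel in pairs exactly as in the computation of Theorem 2.1''. But the cancellation in Theorem 2.1 is driven by the hypothesis that $f$ is \emph{odd}, not merely by $\chi$ being a homomorphism with $\chi(u\sigma(u))=1$ and $\sigma$ an anti-automorphism. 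Carrying the expansion out (this is exactly the identity displayed in the proof of Proposition 3.7, with $\chi$ in place of the $\mu$ written there), what survives for fixed $x$ is, up to an error $O\bigl(\delta(1+|g(x)|)\bigr)$, a combination of the shape $2\chi(y)f(z\sigma(y))g(x)+2\chi(z)f(y\sigma(z))g(x)-\chi(z)f(yx\sigma(z))-\chi(x)\chi(y)f(z\sigma(x)\sigma(y))-\chi(y)f(zx\sigma(y))-\chi(x)\chi(z)f(y\sigma(x)\sigma(z))$. These six terms pair off only when $x\mapsto f(x)+\chi(x)f(\sigma(x))$ is (at least) bounded, i.e.\ when $f$ is approximately $\chi$-odd; Wilson's relation alone cannot match words like $f(zxy)$ against $f(zyx)$ on a noncommutative group. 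This is precisely why the stability arguments in this paper (Theorem 2.3 and Proposition 3.7) split into the case where $f+\chi\,(f\circ\sigma)$ is bounded, where your cancellation is legitimate and gives the bound (\ref{eq41}), and a complementary case handled by an entirely different mechanism (passing to $h=\delta_af-f(a)f$, showing either that $h$ falls under the first case or that $f$ is exactly multiplicative, whence $g=\tfrac12(f+\chi\, f\circ\sigma)$ and one checks (\ref{eq703}) directly). Your proposal omits this dichotomy, so the asserted boundedness of $(y,z)\mapsto f(z)\Psi(x,y)+f(y)\Psi(x,z)$ is unproved in general.

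For (b) the situation is weaker still: the boundedness of $z\mapsto 2g(z)\Phi(x,y)$, which is the step that would upgrade the inequality to the exact Wilson equation (\ref{eq702}), is only announced; you yourself flag that making the reconciliation through (\ref{eq701}) close up is ``the heart of the matter'', and as written nothing is established there. Note that the paper itself does not prove Theorem 3.6 at all: it is quoted from \cite{elq3}, and what the paper does prove in this direction (Proposition 3.7, which strengthens (a) to the short equation (\ref{eq703})) follows the strategy you sketch but with the indispensable case distinction just described, plus the Theorem 2.3-type argument in the unbounded case. Your closing step for (b), invoking ``Theorem 2.2 carried through with the character $\chi$'', is plausible but likewise unverified, since Theorem 2.2 as stated carries no $\chi$. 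To repair the proposal you would need (i) the two-case analysis in (a), and (ii) an actual derivation of exactness in (b) that exploits the unboundedness of $g$ directly, as in [\cite{elq3}, Theorem 2.2 (b)], rather than routing long words through the long equation for $g$.
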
 The purpose in  the following is to prove  that in Theorem 3.6,  case (a),  the function $g$
satisfies  d'Alembert's short functional equation (\ref{eq703}). We
notice here that the solutions of equation (\ref{eq703}) are
obtained by Stetk\ae r in \cite{st7}.
\begin{prop} Let $G$ be a group. Let $\delta\geq 0$.
 Let $\chi$  a unitary
character of $G$ and $\sigma$  an involution of $G$ such that
$\chi(x\sigma(x))=1$ for all $x\in G$.  If the pair $f,g$:
$G\longrightarrow \mathbb{C}$, where $f$ is an unbounded solution of
the following inequality
\begin{equation}\label{eq54}
    \mid f(xy)+\chi(y)f(x\sigma(y))-2f(x)g(y)\mid\leq\delta
\end{equation} for all $x,y\in G$. Then $g$ is a solution of  d'Alembert's short functional equation
(\ref{eq703}). \end{prop}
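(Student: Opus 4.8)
The plan is to mimic the proofs of Theorems 2.1--2.3 in the present twisted setting, treating separately the cases $g$ bounded and $g$ unbounded. First I would replace $y$ by $\sigma(y)$ in \eqref{eq54}, multiply the resulting inequality by the scalar $\chi(y)$ of modulus one, and simplify using $\chi(y)\chi(\sigma(y))=\chi(y\sigma(y))=1$; subtracting from \eqref{eq54} then yields $2\,|f(x)|\,|g(y)-\chi(y)g(\sigma(y))|\le 2\delta$ for all $x,y\in G$, so the unboundedness of $f$ forces the twisted evenness
$$g(\sigma(y))=\overline{\chi(y)}\,g(y),\qquad\text{i.e.}\qquad \chi(y)g(\sigma(y))=g(y),\qquad y\in G.$$
If moreover $g$ is unbounded, then $f\neq0$ and Theorem~\ref{thm21}(b) already yields \eqref{eq703}; so from now on I may assume $g$ bounded.

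Next I would first treat the case in which $x\longmapsto f(x)+\chi(x)f(\sigma(x))$ is bounded on $G$. Following Ebanks--Stetk\ae r, put
$$\Psi(x,y)=g(xy)+\chi(y)g(\sigma(y)x)-2g(x)g(y),\qquad x,y\in G,$$
the natural twisted analogue of the auxiliary function in Theorem 2.1. Expanding $2f(z)\Psi(x,y)+2f(y)\Psi(x,z)$ by repeated use of \eqref{eq54} --- each product of a value of $f$ with a value of $g$ being replaced, up to an $O(\delta)$ error, by a sum of two translates of $f$ --- and simplifying with $\chi(y)\chi(\sigma(y))=1$, the surviving copies of $f$ should either cancel in pairs (as in the exact computation of Theorem 2.1) or combine into bounded quantities of the shape $g(\cdot)\bigl(f(w)+\chi(w)f(\sigma(w))\bigr)$; hence $(x,y,z)\longmapsto 2f(z)\Psi(x,y)+2f(y)\Psi(x,z)$ is bounded on $G^{3}$. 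Unboundedness of $f$ then forces $\Psi(x,y)=c_x f(y)$ for some $c_x\in\mathbb{C}$, and feeding this back forces $c_x=0$; thus $\Psi\equiv0$, i.e. $g(xy)+\chi(y)g(\sigma(y)x)=2g(x)g(y)$ for all $x,y$, which I call $(\dagger)$. To conclude I would subtract $(\dagger)$ from the long equation \eqref{eq701} (valid by Theorem~\ref{thm21}(a)) to get $g(yx)+\chi(y)g(x\sigma(y))=2g(x)g(y)$, interchange $x$ and $y$ to get $g(xy)+\chi(x)g(y\sigma(x))=2g(x)g(y)$, and then apply the twisted evenness with $w=y\sigma(x)$ (so $\sigma(w)=x\sigma(y)$ and $\chi(w)=\chi(y)\overline{\chi(x)}$) to rewrite $\chi(x)g(y\sigma(x))=\chi(y)g(x\sigma(y))$, turning this identity into \eqref{eq703}.

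To remove the auxiliary hypothesis, observe that taking $x=e$ in \eqref{eq54} gives $|f(y)+\chi(y)f(\sigma(y))-2f(e)g(y)|\le\delta$, so if $f(e)=0$ the previous case applies; hence assume $f(e)\neq0$ and normalize $f(e)=1$. For each $a\in G$ the left translate $\delta_af(x)=f(ax)$ still satisfies \eqref{eq54} with the same $\delta$ (the factor $\chi(y)$ is untouched), hence $h_a(x)=f(ax)-f(a)f(x)$ satisfies \eqref{eq54} up to a constant depending on $a$ and has $h_a(e)=0$. If some $h_a$ is unbounded, apply the previous case to the pair $(h_a,g)$. Otherwise $x\longmapsto f(ax)-f(a)f(x)$ is bounded for every $a$, which --- $f$ being unbounded --- forces $f(ax)=f(a)f(x)$ for all $a,x$; substituting $f(xy)=f(x)f(y)$ into \eqref{eq54} and using the unboundedness of $f$ gives $g(y)=\frac{1}{2}\bigl(f(y)+\chi(y)f(\sigma(y))\bigr)$, and a direct computation (using $f(\sigma(uv))=f(\sigma(v))f(\sigma(u))$, $\chi(uv)=\chi(u)\chi(v)$ and $\chi(u)\chi(\sigma(u))=1$) checks that this $g$ satisfies \eqref{eq703}.

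The hard part will be the bookkeeping in the $\Psi$-argument above: confirming that $2f(z)\Psi(x,y)+2f(y)\Psi(x,z)$, after full expansion through \eqref{eq54}, really does collapse to a bounded remainder. Every application of $\sigma$ introduces a character factor, so one must repeatedly invoke $\chi(y)\chi(\sigma(y))=1$ to see that the leftover copies of $f$ organize themselves into the bounded combinations $f(w)+\chi(w)f(\sigma(w))$, and the passage to left translates and to the superstability alternative has to be carried through with the twist in place. The remaining steps are routine adaptations of Theorems 2.1, 2.3 and~\ref{thm21}.
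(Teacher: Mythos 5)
Your proposal is correct and follows essentially the same route as the paper's (rather terse) proof: the same initial case split on boundedness of $x\mapsto f(x)+\chi(x)f(\sigma(x))$ and of $g$, the same auxiliary function $\Psi(x,y)=g(xy)+\chi(y)g(\sigma(y)x)-2g(x)g(y)$ whose boundedness combination $2f(z)\Psi(x,y)+2f(y)\Psi(x,z)$ forces $\Psi\equiv 0$, and the same translate/multiplicativity argument imported from Theorem 2.3 for the remaining case. You merely fill in details the paper leaves implicit (the twisted evenness $\chi(y)g(\sigma(y))=g(y)$, and the passage from $\Psi\equiv0$ to (\ref{eq703}), which you route through the long equation (\ref{eq701}) whereas the paper's Theorem 2.1 pattern gets it directly from twisted evenness --- both conversions are valid).
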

\begin{proof} In the proof we use similar reasoning to that in the proof of Theorem 2.3 and some  author's
computations \cite{elq3}. Assume that the pair $f,g$ satisfies
inequality (\ref{eq54}) where $f$ is an unbounded function on $G$.
First Case: We assume that the function $x\longmapsto
f(x)+\chi(x)f(\sigma(x))$ is a bounded function on $G$, that is
$|f(x)+\chi(x)f(\sigma(x))|\leq\beta$ for some $\beta\geq0$ and for
all $x\in G.$ Let $\Psi(x,y)=g(xy)+\chi(y)g(\sigma(y)x)-2g(x)g(y),\;
x,y\in G.$ We will show  that the function $(x,y,z)\longmapsto
2f(z)\Psi(x,y)+2f(y)\Psi(x,z)$ is a bounded function in the special
case when $g$ is bounded. The  computations in \cite{elq3} show that
$$2f(z)\Psi(x,y)+2f(y)\Psi(x,z)$$
$$=\mu(y)2f(z\sigma(y))g(x)+\mu(z)2f(y\sigma(z))g(x)-\mu(z)f(yx\sigma(z))$$
$$-\mu(x)\mu(y)f(z\sigma(x)\sigma(y))-\mu(y)f(zx\sigma(y))-\mu(x)\mu(z)f(y\sigma(x)\sigma(z))$$
So, we get
\begin{equation}\label{eq41}
|2f(z)\Psi(x,y)+2f(y)\Psi(x,z)|\leq
2\beta\|\mu\||g(x)|+2\beta\|\mu\|^{2}\end{equation}
 for all $x,y,z\in
G$. Now, we will discuss two subcases:\\ If $g$ is unbounded then
from  [\cite{elq3}, Theorem 2.2, (b)], we conclude that $g$ is a
solution of the generalized d'Alembert's short functional equation
(\ref{eq703}).
\\In the rest of the proof we examine the case of $g$  a bounded function on $G$. Then from (\ref{eq41}) the
function $(x,y,z)\longmapsto 2f(z)\Psi(x,y)+2f(y)\Psi(x,z)$ is a
bounded function on $G$. By using similar computations in the proof
of Theorem 2.3 we get our result and this completes the proof.
\end{proof}

Authors'addresses:\\
 Bouikhalene Belaid,   Polydisciplinary Faculty,
University Sultan Moulay Slimane, Beni-Mellal, Morocco, E-mail :
bbouikhalene@yahoo.fr\\
\\Elqorachi Elhoucien, Department of Mathematics,
Faculty of Sciences, University  Ibn Zohr, Agadir, Morocco, E-mail:
elqorachi@hotamail.com
\end{document}